\definecolor{sienne}{RGB}{136, 45, 23}
\newtheorem{theorem}{Theorem}
\newtheorem{prop}[theorem]{Proposition}
\theoremstyle{definition}
\newtheorem{definition}[theorem]{Definition}
\theoremstyle{remark}
\newtheorem{remark}[theorem]{Remark}
\newtheorem{ex}[theorem]{Example}
\patchcmd{\@IEEEeqnarray}{\relax}{\relax\intertext@}{}{}
\DeclareMathOperator{\distance}{distance}
\DeclareMathOperator{\Supp}{Supp}
\DeclareMathOperator{\Leb}{Leb}
\let\Re\relax
\let\Im\relax
\DeclareMathOperator{\Re}{Re}
\DeclareMathOperator{\Im}{Im}
\DeclareMathOperator{\Reelle}{Re}
\newcommand{\smallO}{o}
\newcommand{\bigO}{O}
\newcommand{\diff}[1][-3]{\ensuremath{\mathop{}\mkern#1mu\mathrm{d}}}
\newcommand{\set}{\mathbb}
\newcommand{\N}{{\set{N}}}
\newcommand{\Z}{{\set{Z}}}
\newcommand{\R}{{\set{R}}}
\newcommand{\C}{{\set{C}}}
\newcommand{\T}{{\set{T}}}
\newcommand{\Cc}{{\mathcal C}}
\newcommand{\coloneqq}{\mathrel{\mathord{:}\mathord=}}
\newcommand{\eq}{\Leftrightarrow}
\newcommand{\per}[1]{#1\mkern0mu_{\mathrm{per}}}
\newcommand{\eu}{{\mathrm e}}
\newcommand{\iu}{{\mathrm i}}
\newcounter{proofstep}[theorem]
\newcommand{\step}[1]{%
\refstepcounter{proofstep}%
\vskip-\lastskip\medskip\noindent\textit{Step \arabic{proofstep}: #1. --- }}
\xapptocmd\proof{\setcounter{proofstep}{0}}{}{}
\title{Null-controllability for weakly dissipative heat-like equations}
\author{Paul Alphonse}
\address{(Paul Alphonse) Universit\'e de Lyon, ENSL, UMPA - UMR 5669, F-69364 Lyon}
\email{paul.alphonse@ens-lyon.fr}
\author{Armand Koenig}
\thanks{This work has been partially supported by the ANR LabEx CIMI (under grant ANR-11-LABX-0040) within the French State Programme “Investissements d’Avenir”.}
\address{(Armand Koenig) IMT, Université de Toulouse, CNRS, Université Toulouse III - Paul Sabatier, Toulouse, France}
\email{armand.koenig@math.univ-toulouse.fr}
\keywords{Null-controllability; diffusive equations; $\gamma$-thick sets; Cantor-Smith-Volterra sets}
\subjclass[2020]{93B05, 93C05, 35R11}
\begin{document}
\maketitle

\begin{abstract} We study the null-controllability properties of heat-like equations posed on the whole Euclidean space $\mathbb R^n$. These evolution equations are associated with Fourier multipliers of the form $\rho(\vert D_x\vert)$, where $\rho\colon[0,+\infty)\rightarrow\mathbb C$ is a measurable function such that $\Re\rho$ is bounded from below. 
We consider the ``weakly dissipative'' case, a typical example of which is given by the fractional heat equations associated with the multipliers $\rho(\xi) = \xi^s$ in the regime $s\in(0,1)$, for which very few results exist. We identify sufficient conditions and necessary conditions on the control supports for the null-controllability to hold. 
More precisely, we prove that these equations are null-controllable in any positive time from control supports which are sufficiently thick at all scales. Under assumptions on the multiplier $\rho$, in particular assuming that $\rho(\xi) = o(\xi)$, we also prove that the null-controllability implies that the control support is thick at all scales, with an explicit lower bound of the thickness ratio in terms of the multiplier $\rho$.
Finally, using Smith-Volterra-Cantor sets, we provide examples of non-trivial control supports that satisfy these necessary or sufficient conditions.
\end{abstract}

\section{Introduction}

\subsection{Motivation} We study the null-controllability properties of the following class of parabolic heat-like equations
\begin{equation}\label{eq-gen}\tag{$E_\rho$}
\begin{cases}
	\partial_tf(t,x) + \rho(\vert D_x\vert)f(t,x) = \mathds 1_\omega u(t,x), \quad (t,x) \in \R^*_+ \times \R^n,\\
	f(0,\cdot) = f_0\in L^2(\R^n).
\end{cases}
\end{equation}
Above, the operator $\rho(\vert D_x\vert)$ is the Fourier multiplier associated with the symbol $\rho(\vert\xi\vert)$, with $\vert\cdot\vert$ the canonical Euclidean norm in $\mathbb R^n$, the function $\rho\colon[0,+\infty)\rightarrow\mathbb C$ being measurable such that $\Re\rho$ is bounded from below, and $\omega\subset\mathbb R^n$ is a measurable set with positive Lebesgue measure. We investigate the relationship between the geometry of $\omega$ and the null-controllability properties of these heat-like equations, defined as follows.

\begin{definition} Let $T>0$ and $\omega\subset\mathbb R^n$ be a measurable set with positive measure. The equation \eqref{eq-gen} is said to be \textit{null-controllable} from $\omega$ in time $T>0$ when for all $f_0\in L^2(\mathbb R^n)$, there exists a control $u\in L^2((0,T)\times\omega)$ such that the mild solution of \eqref{eq-gen} satisfies $f(T,\cdot) = 0$.
\end{definition}

Although the null-controllability properties of parabolic equations posed on bounded domains of $\mathbb R^n$ have been known for years \cite{LR,MZ,M2006,AE,AEWZ,BM}, the same study for parabolic equations posed on the whole Euclidean space $\mathbb R^n$, as the equations \eqref{eq-gen}, is quite recent. It follows from previous works \cite{AB,AM,BEPS,ES,EV,HWW,LWXY,TWX,WWZZ} that the null-controllability properties of such models, and also their approximate null-controllability or the stabilization properties, are associated with the geometric notion of thickness, defined as follows.
\begin{definition}
Given $\gamma\in(0,1)$ and $r>0$, the set $\omega\subset\mathbb R^n$ is said to be $\gamma$-\textit{thick} at scale $r$, or $(\gamma,r)$-thick, when it is measurable and satisfies
\[\forall x\in\mathbb R^n,\quad\Leb(\omega\cap B(x,r))\geq\gamma\Leb(B(x,r)),\]
where $\Leb$ denotes the Lebesgue measure in $\mathbb R^n$.
\end{definition}

Precisely, it is known from the work \cite{AM} that the thickness is a geometric necessary condition for the null-controllability of the general equations \eqref{eq-gen} (in fact, more generally, for the rapid stabilization of such equations). This condition also turns out to be a necessary and sufficient condition that ensures the null-controllability in any positive time $T>0$ of the fractional heat equations $($\hyperref[eq-gen]{$E_{\rho_{s}}$}$)$ associated with the multipliers $\rho_s(t) = t^s$ in the regime $s>1$ \cite{AB,EV,NTTV, WWZZ}. For this particular class of equations, which will serve as a common thread throughout this introduction, it was also proven in \cite{Ko} that in the weak-dissipation regime $s\in(0,1)$, positive null-controllability results can not be obtained from control supports $\omega\subset\mathbb R^n$ which are not dense in the whole space $\mathbb R^n$, this result being also established in the critical dissipation regime $s=1$ (corresponding to the half-heat equation) in dimension $n=1$~\cite[Théorème 2.3]{K}\cite{L}. 

From this point on, two areas of work naturally emerge. On the one hand, it would be interesting to characterize the multipliers $\rho$ for which the thickness is a necessary and sufficient geometric condition that ensures the null-controllability of the associated evolution equations \eqref{eq-gen}, as with the strongly-dissipative heat equations (it is worth noting that the generalized Lebeau-Robianno's method as stated by Duyckaerts and Miller~\cite[Theorem 6.1]{DM} together with Kovrijkine's spectral estimate~\cite{Kovrijkine} gives a first result on this topic). One the other hand, it would be interesting to study the null-controllability properties of the heat-like equations \eqref{eq-gen} in a weak-dissipation setting, for which very few results have been obtained so far, and to continue the studies carried out in the works \cite{Ko,K,L}, in particular by looking for control supports from which these equations can be controlled to zero. In the following, we will only focus on this second point.

\subsection{Main results}\label{sec-results}
In the present work, we prove that the null-controllability properties of the parabolic heat-like equations \eqref{eq-gen} is associated with the following stronger notion of thickness in the weak-dissipation regime.

\begin{definition}\label{def-a-thick}
Given some $r_0>0$ and a function $\gamma\colon(0,r_0]\rightarrow[0,1]$, a measurable set $\omega\subset\mathbb R^n$ is said to be \textit{thick with respect to} $\gamma$ (or $\gamma$-\textit{thick}) when it satisfies that for every $r\in(0,r_0]$ and $x\in \R^n$,
\[\Leb(\omega\cap B(x,r))\geq\gamma(r)\Leb(B(x,r)).\]
\end{definition}

This definition can be rephrased as ``$\omega$ is $\gamma(r)$-thick at every scale $r\in(0,r_0]$''. Therefore, being thick with respect to $\gamma$ is a far stronger notion than the usual notion of thickness.

We first prove a general result stating that the parabolic equation \eqref{eq-gen} is always null-controllable from control supports $\omega\subset\mathbb R^n$ being thick with respect to some function $\gamma_{\rho}$ associated to the multiplier $\rho$ (under reasonnable assumptions).

\begin{theorem}\label{thm-main2} Let $\rho\colon[0,+\infty)\rightarrow\mathbb C$ be a function such that $\Re\rho$ is a non-negative continuous function satisfying $\lim_{+\infty}\Reelle\rho=+\infty$. Let $r_0>0$ and $\gamma_{\rho}\colon(0,r_0]\rightarrow(0,1]$ be the function defined by 
\begin{equation}\label{eq-gammarho}
    \gamma_{\rho}(r)\coloneq c_0\exp(-c_1(\Reelle\rho)(1/r)^{\alpha}),
\end{equation}
where $c_0\in(0,1)$, $c_1>0$ and $\alpha\in(0,1)$ are some parameters. Let $\omega \subset \set R^n$ be $\gamma_\rho$-thick. Then, for every $T>0$, the parabolic equation \eqref{eq-gen} is null-controllable from $\omega$ in time $T$.
\end{theorem}

Under additional assumptions on the multiplier $\rho$, we also prove that the thickness with respect to some function $\tilde \gamma_\rho\colon (0,r_0]\rightarrow[0,1]$ is also a necessary condition for the null-controllability to hold.

\begin{theorem}\label{thm-mainIII}
Let $K>0$ and $\Cc = \{\xi \in \set C, \Re(\xi)>K, 
\lvert\Im(\xi)|<K^{-1}\Re(\xi)\}$. Let $\rho\colon \Cc\cup \R_+ \to \C$ be such that
\begin{itemize}
    \item $\rho$ is holomorphic on $\Cc$,
    \item $\rho(\xi) = \smallO(\xi)$ as $|\xi|\to \infty$, $\xi \in \Cc$,
    \item $\rho$ is measurable on $\R_+$ and $\inf_{\R_+} \Re(\rho) >-\infty$,
    \item there exists $C>0$ such that for $\xi\in \Cc$, $\lvert\Im(\rho(\xi))|\leq C\Re(\rho(\xi))$,
    \item $\ln(\xi) = \smallO (\Re\rho(\xi))$ in the limit $|\xi|\to +\infty$, $\xi\in \Cc$.
\end{itemize}

Let $T>0$ and $\omega\subset \R^n$ be measurable. Assume that the parabolic equation~\eqref{eq-gen} is null controllable from $\omega$ in time $T>0$. 

There exists $\lambda>0$ and $c>0$ such that if $\epsilon>0$, and if $r\mapsto h_r$ is defined for small enough $r$ and such that for every $r$ small enough,
\begin{equation}\label{eq-tech}
    \sqrt{h_r (2T+\epsilon) \Re\rho\left(\frac{\lambda}{h_r}\right)} \leq r,
\end{equation}
then, for every small enough $r> 0$, and $x\in \R^n$, we have
\[
\frac{\Leb(\omega\cap B(x,r))}{\Leb(B(x,r))} 
\geq cr^{-n}\exp\left(-2(T+\epsilon) \Re\rho\left(\frac{\lambda}{h_r}\right)\right).
\]
\end{theorem}

\begin{remark}\hspace{0pt}
\begin{itemize}
    \item The hypothesis $\rho(\xi) = \smallO(\xi)$ is a rigorous way of saying that the heat-like equation~\eqref{eq-gen} is weakly dissipative. Since the null-controllability is known to hold for strongly dissipative equations (e.g., the heat equation) on any thick set, one cannot expect to obtain \cref{thm-mainIII} without a hypothesis of this kind.
    \item The hypotheses $\lvert\Im(\rho(\xi))|\leq C\Re(\rho(\xi))$ and $\ln(\xi) = \smallO (\Re\rho(\xi))$ are mainly assumed for cosmetic reasons. We could prove some results with weaker hypotheses, but the result (and the proof) would be even more tedious. This would be of dubious interest, as such, we prefer not to detail these results here.
    \item The holomorphy hypothesis is a technical limitation of our strategy of proof involving complex deformation of integration path. Proving a version of \cref{thm-mainIII} without this hypothesis is an open problem.
\end{itemize}
\end{remark}

\begin{remark}
Given some function $\gamma\colon(0,r_0]\rightarrow[0,1]$, an example of set being $\gamma$-thick is of course the whole space $\mathbb R^n$, but it might be difficult to visualize non trivial examples of sets satisfying this property. In \cref{sec-examples}, we construct subsets $\omega$ of $\mathbb R^n$ which are $\gamma$-thick and such that $\Leb(\set R^n \setminus \omega)>0$. Roughly speaking, when $\gamma$ is assumed to be decreasing and satisfying $\gamma(r) \to 0$ as $r \to 0$, these are complements of Smith-Volterra-Cantor sets associated with the sequence $\big(24(\gamma(2^{-n}) - \gamma(2^{-n-1}))\big)_{n\geq0}$ (whose definition we recall in \cref{def-SVC}). We refer to \cref{th-omega-gamma-thick} for the details.
\end{remark}

Let us now apply \cref{thm-main2} and \cref{thm-mainIII} to the fractional heat equations.

\begin{ex}\label{ex-frac} For all positive real number $s>0$, let us consider the multiplier $\rho_s\colon[0,+\infty)\rightarrow[0,+\infty)$ defined for all $t\geq0$ by $\rho_s(t) = t^s$. Let us consider a positive time $T>0$ and a measurable set $\omega\subset\mathbb R^n$. As recalled in the beginning of the introduction, the null-controllability properties of the associated fractional heat equation $($\hyperref[eq-gen]{$E_{\rho_{s}}$}$)$ are well understood in the high-dissipation regime $s>1$. We will therefore only focus on the weak-dissipation regime $s\in(0,1]$. 

On the one hand, it follows from \cref{thm-main2} that in the regime $s\in(0,1]$, and when the set $\omega$ is thick with respect to the function $\gamma_s(r) = c_0\exp(-c_1r^{\alpha s})$, where $c_0,c_1>0$ and $\alpha\in(0,1)$ are parameters, then the fractional heat equation $($\hyperref[eq-gen]{$E_{\rho_{s}}$}$)$ is null-controllable from $\omega$ at time $T$. As far as we know, this is the first positive null-controllability result for the fractional heat equation in the weak-dissipative regime $s\in(0,1]$.

On the other hand, notice that when $s\in(0,1)$, the condition \eqref{eq-tech} of \cref{thm-mainIII} is satisfied with $h_r = r^{2/(1-s)}$ for the above multiplier $\rho_s$. As a consequence, still in the regime $s\in(0,1)$, it follows from \cref{thm-mainIII} that if the fractional heat equation $($\hyperref[eq-gen]{$E_{\rho_{s}}$}$)$ is null-controllable from $\omega$ at time $T$, then there exist some positive constants $c_0,c_1>0$ such that $\omega$ is thick with respect to the function $\gamma_s(r) = c_0\exp(-c_1r^{2s/(1-s)})$. Notice that we do not consider the critical case $s=1$, whose understanding remains an open problem.
\end{ex}

\begin{remark}
Let us consider the fractional heat equation associated to the Fourier multiplier $\rho_s(t) = t^s$ as above, with $s\leq1$. In dimension one, one popular way to study the null-controllability of PDEs is the \emph{moment method}. The is the strategy employed by Micu and Zuazua~\cite{MZ}, using \emph{shaped controls}, i.e., controls of the form $u(t)h(x)$.

Our results underline the difference between \emph{shaped controls} and \emph{internal controls} (the kind of controls we are considering). Indeed, \cref{ex-frac} show that if $\omega$ is sufficiently thick, the fractional heat equation is null controllable with internal controls; but if we consider \emph{shaped controls}, then null-controllability \emph{never} holds, whatever the profile $h$ is~\cite[Section~5]{MZ} (see also \cite[Appendix]{M2006}).
\end{remark}

\section{Sufficient condition}\label{sec-sufficient}

This section is devoted to the proof of \cref{thm-main2}, which states that given a function $\rho:[0,+\infty)\rightarrow\mathbb C$ such that $\Reelle\rho$ is a continuous non-negative function such that $\lim_{+\infty}\Reelle\rho=+\infty$, the parabolic equation \eqref{eq-gen} is null-controllable from any set $\omega\subset\mathbb R^n$ being thick with respect to the density $\gamma_{\rho}\colon(0,r_0]\rightarrow(0,1]$ defined by eq.~\eqref{eq-gammarho}, and in any positive time $T>0$.

By the Hilbert Uniqueness Method (see, e.g., \cite[Theorem 2.44]{coron_book}), the null-controllability of the parabolic equation \eqref{eq-gen} is equivalent to the observability of the heat-like semigroup $(\eu^{-t\overline\rho(\vert D_x\vert)})_{t\geq0}$, that we recall in the following definition.

\begin{definition} 
Let $T>0$, and let $\omega \subset \R^n$ be measurable. The semigroup $(\eu^{-t\overline\rho(\vert D_x\vert)})_{t\geq0}$ is said to be observable from the set $\omega$ in time $T$ if there exists a positive constant $C_{\omega,T} > 0$ such that for all $g\in L^2(\R^n)$,
\[\Vert \eu^{-T\overline\rho(\vert D_x\vert)}g\Vert^2_{L^2(\R^n)}\le C_{\omega,T} \int_0^T\Vert \eu^{-t\overline\rho(\vert D_x\vert)}g\Vert^2_{L^2(\omega)}\,\diff t.\]
\end{definition}

We will prove that the heat-like semigroup $(\eu^{-t\overline\rho(\vert D_x\vert)})_{t\geq0}$ is indeed observable, with an upper bound on the observability constant. This will imply that the heat-like equation~\eqref{eq-gen} is null-controllable.

\begin{theorem}\label{thm-obsgen}
Let $c_0\in(0,1)$, $c_1>0$, $\alpha \in (0,1)$ and let $\gamma_\rho$ be defined by \cref{eq-gammarho}. Let $\omega\subset\mathbb R^n$ be $\gamma_{\rho}$-thick. There exists a positive constant $C>0$ such that for all $T>0$ and $g\in L^2(\mathbb R^n)$,
\[
    \Vert \eu^{-T\overline\rho(\vert D_x\vert)}g\Vert^2_{L^2(\R^n)}\le C\exp\bigg(\frac C{T^{\alpha/(1-\alpha)}}\bigg)\int_0^T\Vert \eu^{-t\overline\rho(\vert D_x\vert)}g\Vert^2_{L^2(\omega)}\,\diff t,
\]
where $\alpha\in(0,1)$ is the parameter appearing in the definition of the function $\gamma_{\rho}$.
\end{theorem}

The rest of this section is devoted to the proof of \cref{thm-obsgen}. This will be done in two steps: first proving a spectral estimate reminiscent of Jerison and Lebeau's spectral inequality~\cite[Theorem~14.6]{JL96} or Logvinenko-Sereda-Kovrijkine estimate~\cite{Kovrijkine}, and second using Lebeau and Robbiano's method, as stated in the following theorem proven by Beauchard and Pravda-Starov.

\begin{theorem}[Theorem 2.1 in \cite{BPS}]\label{thm-obs}
Let $\omega$ be a measurable subset of $\mathbb{R}^n$ with positive Lebesgue measure, $(\pi_k)_{k\geq1}$ be a family of orthogonal projections defined on $L^2(\mathbb{R}^n)$ and $(e^{tA})_{t\geq0}$ be a contraction semigroup on $L^2(\mathbb{R}^n)$. Assume that there exist $c_1,c_2,a,b,t_0,m>0$ some positive constants with $a<b$ such that the following spectral inequality 
\[
	\forall g\in L^2(\mathbb{R}^n), \forall k\geq1,\quad \Vert\pi_k g\Vert_{L^2(\mathbb{R}^n)}\le e^{c_1k^a}\Vert\pi_kg\Vert_{L^2(\omega)},
\]
and the following dissipation estimate
\[
	\forall g\in L^2(\mathbb{R}^n), \forall k\geq1, \forall 0<t<t_0,\quad \Vert(1-\pi_k)(e^{tA}g)\Vert_{L^2(\mathbb{R}^n)}\le \frac{1}{c_2}e^{-c_2t^mk^b}\Vert g\Vert_{L^2(\mathbb{R}^n)},
\]
hold. Then, there exists a positive constant $C>1$ such that the following observability estimate holds
\[
    \forall T>0, \forall g\in L^2(\mathbb{R}^n),\quad \Vert e^{TA}g\Vert^2_{L^2(\mathbb{R}^n)}\le C\exp\left(\frac{C}{T^{\frac{am}{b-a}}}\right)\int_0^T\Vert e^{tA}g\Vert^2_{L^2(\omega)}\,\mathrm dt.
\]
\end{theorem}

In the rest of this section, in order to alleviate the text, we will denote the spectral projectors associated with the operator $(\Reelle\rho)(\vert D_x\vert)$ as follows
\begin{equation}\label{eq-projspe}
    \pi_{\lambda,\rho} = \mathds 1_{(-\infty,\lambda]}((\Reelle\rho)(\vert D_x\vert)),\quad \lambda\geq0.
\end{equation}
Let us now prove the following spectral estimates.

\begin{prop}\label{prop-spec}
Let $c_0\in(0,1)$, $c_1>0$, $\alpha \in (0,1)$ and let $\gamma_\rho$ be defined by \cref{eq-gammarho}. Let $\omega\subset\mathbb R^n$ be $\gamma_{\rho}$-thick.
Then, there exists a positive constant $c>0$ such that
\[
    \forall\lambda>0, \forall g\in L^2(\mathbb R^n),\quad\Vert\pi_{\lambda,\rho}g\Vert_{L^2(\mathbb R^n)}
    \le c\eu^{c\lambda^{\alpha}}\Vert\pi_{\lambda,\rho}g\Vert_{L^2(\omega)}.
\]
\end{prop}

\begin{remark}\label{rk-kov} 
The proof of \cref{prop-spec} will be based on Kovrikine's spectral estimate \cite[Theorem 3]{Kovrijkine} stating that there exists a universal positive constant $K>0$ depending only on the dimension $n$ such that for all $(\gamma,L)$-thick set $\omega\subset\mathbb R^n$, with $\gamma\in(0,1]$ and $L>0$, for all $\lambda\geq0$ and $g\in L^2(\mathbb R^n)$ such that $\Supp\widehat g\subset B(0,\lambda)$, we have
\begin{equation}\label{eq-kov}
    \Vert g\Vert_{L^2(\mathbb R^n)}
    \le\Big(\frac K{\gamma}\Big)^{K(1+L\lambda)}\Vert g\Vert_{L^2(\omega)}.
\end{equation}
\end{remark}

\begin{proof}[Proof of \cref{prop-spec}] Let us consider some $\lambda>0$ and $g\in L^2(\mathbb R^n)$ be fixed. Recall that by definition of thickness with respect to $\gamma_\rho$, the set $\omega$ is $\gamma_{\rho}(r)$-thick at every scale $r\in(0,r_0]$, meaning that the following estimate holds for every $r\in(0,r_0]$ and $x\in\mathbb R^n$,
\[
    \Leb(\omega\cap B(x,r))\geq\gamma(r)\Leb(B(x,r)).
\]
On the other hand, by definition \eqref{eq-projspe} of the spectral projector $\pi_{\lambda,\rho}$, the function $\widehat{\pi_{\lambda,\rho}}g$ is supported in $B(0,\rho^{\dagger}(\lambda))$, where
\[
    \rho^{\dagger}(\lambda) \coloneqq \sup\big\{\mu\geq0 : \Reelle\rho(\mu)\le\lambda\big\}.
\]
Notice that $\rho^{\dagger}(\lambda)$ is well-defined since $\lim_{+\infty}\Reelle\rho = +\infty$ by assumption. We therefore deduce from the spectral estimate \eqref{eq-kov} that for all $r\in(0,r_0]$,
\[
    \Vert\pi_{\lambda,\rho}g\Vert_{L^2(\mathbb R^n)}\le\Big(\frac K{\gamma_{\rho}(r)}\Big)^{K(1+r\rho^{\dagger}(\lambda))}\Vert\pi_{\lambda,\rho}g\Vert_{L^2(\omega)},
\]
where the constant $K>0$ only depends on the dimension $n$. Assume for now that $\lambda\geq\lambda_\rho$, where $\lambda_\rho>0$ is defined so that
\[
    \forall\lambda\geq\lambda_\rho,\quad \rho^{\dagger}(\lambda)\geq1/r_0.
\]
Then, by choosing $r = 1/\rho^{\dagger}(\lambda)\in(0,r_0]$ in the above estimate, we obtain that
\[
    \Vert\pi_{\lambda,\rho}g\Vert_{L^2(\mathbb R^n)}\le\Big(\frac K{\gamma_{\rho}(1/\rho^{\dagger}(\lambda))}\Big)^{2K}\Vert\pi_{\lambda,\rho}g\Vert_{L^2(\omega)}.
\]
Moreover, since the function $\Reelle\rho$ is continuous, it follows from the definition of $\gamma_{\rho}$ and the definition of $\rho^{\dagger}(\lambda)$ that
\[
    \gamma_{\rho}(1/\rho^{\dagger}(\lambda)) = c_0\exp(-c_1(\Reelle\rho)(\rho^{\dagger}(\lambda))^{\alpha})
    = c_0\exp(-c_1\lambda^{\alpha}).
\]
As consequence, we obtain the following estimate
\[
    \Vert\pi_{\lambda,\rho}g\Vert_{L^2(\mathbb R^n)}\le\Big(\frac K{c_0}\Big)^{2K}e^{2c_1K\lambda^{\alpha}}\Vert\pi_{\lambda,\rho}g\Vert_{L^2(\omega)}.
\]
For the case $0<\lambda<\lambda_{\rho}$, we use again Kovrijkine's estimate to find a $C_1$ such that for every $g\in L^2(\mathbb R^n)$, $\Vert\pi_{\lambda,\rho}g\Vert_{L^2(\mathbb R^n)}\le C_1\Vert\pi_{\lambda,\rho}g\Vert_{L^2(\omega)}$, since $\pi_{\lambda,\rho}(L^2(\mathbb R^n))\subset\pi_{\lambda_{\rho},\rho}(L^2(\mathbb R^n))$. Therefore, for any $c_0>0$, and in particular for $c_0 = 2c_1K$, we have
\[
    \|\pi_{\lambda,\rho}g\|_{L^2(\R^n)}  \leq C_1\eu^{c_0\lambda^\alpha} \|\pi_{\lambda,\rho}g\|_{L^2(\omega)}.
\]
This ends the proof of \cref{prop-spec}.
\end{proof}

\begin{proof}[Proof of \cref{thm-obsgen}] Given a positive time $T>0$ and a set $\omega\subset\mathbb R^n$ being thick with respect to the function $\gamma_{\rho}$ defined in \eqref{eq-gammarho}, we are now in position to prove an observability estimate for the semigroup $(\eu^{-t\overline\rho(\vert D_x\vert)})_{t\geq0}$ from $\omega$ in time $T$. First notice from Plancherel's theorem that the following dissipation estimates hold for any $t>0$, $k\geq1$ and $g\in L^2(\mathbb R^n)$,
\[
    \Vert(1-\pi_{k,\rho})(\eu^{-t\overline\rho(\vert D_x\vert)}g)\Vert_{L^2(\mathbb R^n)}
    = \Vert(1-\pi_{k,\rho})(\eu^{-t(\Reelle\rho)(\vert D_x\vert)}g)\Vert_{L^2(\mathbb R^n)}
    \le e^{-tk}\Vert g\Vert_{L^2(\mathbb R^n)}.
\]
On the other hand, it follows from \cref{prop-spec} that there exists a positive constant $c>0$ such that
\[
    \forall k\geq1, \forall g\in L^2(\mathbb R^n),\quad\Vert\pi_{k,\rho}g\Vert_{L^2(\mathbb R^n)}
    \le c\eu^{c\lambda^{\alpha}}\Vert\pi_{k,\rho}g\Vert_{L^2(\omega)},
\]
where $\alpha\in(0,1)$ is the parameter appearing in the definition of the function $\gamma_{\rho}$. \Cref{thm-obsgen} is then a consequence of \cref{thm-obs}.
\end{proof}

\section{Necessary condition}\label{sec-necessary}
The aim of this section is to prove \cref{thm-mainIII}. To that end, we will use some asymptotics on the evolution of coherent states~\cite[Section~4]{Ko}, that we recall here.

Let $K>0$, and $\Cc \subset \set C$ be as in the statement of \cref{thm-mainIII}. For $\xi=(\xi_i)_{1\leq i \leq n} \in \set C^n$, we will denote $|\xi| = \left(\sum_i |\xi_i|^2\right)^{1/2}$ (the usual norm) and $N(\xi) = \left(\sum_i \xi_i^2\right)^{1/2}$ with principal value of the square root. Notice that for $\xi \in \set R^n$, $N(\xi) = N(\overline \xi) = |\xi|$.

In this section, we choose some quantities as follows:
\begin{enumerate}
 \item let $\lambda >0$ large enough (for instance $\lambda = 4(K+1)$) and $\xi_0 = (\lambda,0,\dotsc,0) \in \set R^n$,
 \item let $\delta>0$ small enough such that for every $\xi\in\set R^n$ and $x\in \set R^n$, $|\xi-\xi_0|<\delta$ and $|x|<\delta$ implies $N(\xi+\xi_0 + ix) \in \Cc$,
 \item let $\chi\in C_c^\infty(B(0,\delta))$ such that $0\leq \chi\leq 1$ and $\chi\equiv1$ on a neighborhood of $0$, say $B(0,\delta_2)$,
 \item finally, for every $0<h\leq h_0$, we define
\begin{equation}\label{eq-def-eps}
    \epsilon(h) \coloneqq T\sup_{|\xi|<\delta, |x|<\delta } h 
    \left|\rho\left(\frac{N(\xi+\xi_0+ix)}h\right)\right|.
\end{equation}

\end{enumerate}
Under these assumptions, we will state some upper and lower bounds on the following function:
\begin{equation}\label{eq-def-gh}
 g_h(t,x) \coloneqq \int_{\set R^n} \chi(\xi-\xi_0) \eu^{-(\xi-\xi_0)^2\!/2h +\iu x\xi/h -t\overline \rho(|\xi|/h)} \diff \xi.
\end{equation}

\begin{prop}\label{th:ighv_asy}
 Under the above assumptions, we have uniformly in $0\leq t \leq T$ and $|x|$ small enough
\begin{align*}
 g_h(t,x) &= (2\pi h)^{n/2} \eu^{ix\xi_0/h - x^2\!/2h -t\overline \rho\big(\frac{N(\xi_0-ix)}h\big) 
+ \bigO\big(\frac{\scriptstyle\epsilon(h)^2}h\big)} 
\big(1 + \bigO(h+\epsilon(h))\big),
\end{align*}
in the limit $h\to 0^+$.
\end{prop}
\begin{proof}
With $\rho_{t,h}(\xi) \coloneqq -t\overline\rho(N(\overline \xi))$ and with the notations of \cite[\S3.2]{Ko}, $g_h(t,x) = I_{t,h,1}(x)$. Then apply \cite[Proposition~3.5]{Ko} adapted in dimension $n$ \cite[\S4.3]{Ko}.
\end{proof}

\begin{prop}\label{th:ighv_upper}
 Let $\eta>0$ and $N \in \set N$. Under the above assumptions, we have uniformly in $0\leq t \leq T$ and $|x|>\eta$
\[
 |g_h(t,x)| \leq \frac C{|x|^N} 
\eu^{-c/h}.
\]
\end{prop}
\begin{proof}
Apply \cite[Proposition~3.7]{Ko} adapted in dimension $n$ \cite[\S4.3]{Ko} with $\rho_{t,h}(\xi) = -t\overline\rho(N(\overline \xi))$. Note that with the notations of this theorem, $\epsilon_t(h)/h = \smallO(c/h)$.
\end{proof}

With these estimates, we can prove \cref{thm-mainIII}.
\begin{proof}[Proof of \cref{thm-mainIII}]
Let $\epsilon>0$ as in the statement of \cref{thm-mainIII}, and let $\epsilon'>0$ small enough (depending on $\epsilon$) to be chosen later.

\step{Observability inequality} As in the proof of \cref{thm-main2} (and see \cite[Theorem~2.44]{coron_book}), the exact null-controllability of the system $(\partial_t+ \rho(|D_x|))f = \mathds1_\omega u$ in time $T$ is equivalent to the following observability inequality: for every $g_0 \in L^2(\R)$, the solution $g$ of 
\begin{equation}\label{eq-gen-frac-adj}
    \partial_tg(t,x) + \overline\rho(|D_x|)g(t,x) = 0,\quad g(0,\cdot) = g_0,
\end{equation} 
satisfies
\begin{equation}\label{eq-obs}
    \|g(T,\cdot)\|_{L^2(\R^n)} \leq C \|g\|_{L^2((0,T)\times \omega)}.
\end{equation}

Throughout this proof, $c$ and $C$ denote constants that can change from line to line.

\step{Choice of test functions} We want to find a lower bound on $\Leb(\omega \cap B(x,L))$ by testing the observability inequality on $g_h$ defined in \cref{eq-def-gh}. Since the equation~\eqref{eq-gen} is invariant by translation, we may assume that $x = 0$. Notice that $g_h$ satisfies
\begin{equation*}
    g_h(t,x) =  h^n \int_{\R^n} \chi(h\xi-\xi_0) \eu^{-(h\xi - \xi_0)^2\!/2h + \iu x \xi - t \overline \rho(|\xi|)}\diff \xi.
\end{equation*}
Thus, $g_h$ is a solution to the heat-like equation~\eqref{eq-gen-frac-adj}.

\step{Lower bound on $g_h$}
Let $R \in(0,1)$ be such that for every $A > 2K$, $\overline{B(A,A R)}\subset \Cc$. According to Harnack's inequality~\cite[Chapter X, Theorem 2.14]{Conway}, if $A>2K$ and $|\mu|<AR$, we have
\[
\Re\rho\left(A+\mu\right) \leq \frac{AR+|\mu|}{AR-|\mu|} \Re\rho\left(A\right).
\]
Hence, if $\delta'<R$, for every $|z|<\delta'$ and $h>0$ small enough, it follows that
\[
\Re\rho\left(\frac{\lambda+z}h\right) \leq \frac{R+\delta'}{R-\delta'} \Re\rho\left(\frac{\lambda}h\right).
\]
We choose $\delta'$ such that $(R+\delta')/(R-\delta') < 1+\epsilon'$.
Reducing $\delta$ if necessary, we may assume that $\delta<\eta$ and that for $\xi,x\in \set R^n$ with $|\xi|<\delta$ and $|x|<\delta$, $|N(\xi_0+\xi+\iu x) - N(\xi_0)|<\delta'$. In this case, we have
\[ 
\Re\rho\left(\frac{N(\xi_0+\xi+\iu x)}h\right) \leq (1+\epsilon') \Re\rho\left(\frac{N(\xi_0)}h\right)
= (1+\epsilon') \Re\rho\left(\frac{\lambda}h\right).
\]
Plugging this into the asymptotic of \cref{th:ighv_asy}, we deduce that for $h$ small enough
\begin{align}
    \|g_h(T,\cdot)\|_{L^2(\R^n)}^2
    &\geq  \|g_h(T,\cdot)\|_{L^2(|x|<\delta)}^2\notag\\
    &\geq c h^n \int_{|x|<\delta} \eu^{-x^2\!/h - 2T \Re\rho(N(\xi_0+\xi-\iu x)/h)+\bigO(\epsilon(h)^2/h)} \diff x\notag\\
    &\geq c h^n \int_{|x|<\delta} \eu^{-x^2\!/h - 2T(1+\epsilon') \Re\rho(\lambda/h)+\bigO(\epsilon(h)^2/h)} \diff x\notag\\
    & \geq c h^{3n/2} \eu^{- 2T(1+\epsilon') \Re\rho(\lambda/h)+\bigO(\epsilon(h)^2/h)}.\notag
    \intertext{
Since we assumed that $\lvert\Im(\rho)| \leq C\Re(\rho)$, we get that $\epsilon(h) \leq h(1+\epsilon')(1+C) \Re (\rho(\lambda/h))$ (see the definition of $\epsilon$ \cref{eq-def-eps}). We deduce that}
    \|g_h(T,\cdot)\|_{L^2(\R^n)}^2
    &\geq c h^{3n/2}\eu^{- 2T(1+\epsilon') \Re\rho(\lambda/h)\big(1+ \bigO(\epsilon(h))\big)}.\notag
\intertext{Finally, for $h$ small enough, $(1+\epsilon')(1+\bigO(\epsilon(h))) < 1+2\epsilon'$. We get}
    \|g_h(T,\cdot)\|_{L^2(\R^n)}^2
    &\geq c h^{3n/2}\eu^{- 2T(1+2\epsilon') \Re\rho(\lambda/h)}.\label{eq-lhs-lower}
\end{align}

\step{Upper bound on $g_h$} Recall that for $h$ small enough, $|\xi|<\delta$ and $|x| < \delta$, $\Re\rho(N(\xi_0+\xi+\iu x)/h) \geq 0$. Hence, the asymptotics stated in~\cref{th:ighv_asy} imply the upper bound
\[
    |g_h(t,x)| \leq C h^{n/2} \eu^{- x^2\!/2h + \bigO(\epsilon(h)/h)}.
\]
Moreover, according to \cref{th:ighv_upper}, for every $h>0$ small enough and for every $0<r<\delta$,
\begin{align}
    \|g_h\|_{L^2((0,T)\times\omega)}^2
    &\leq \|g_h\|_{L^2((0,T)\times\{\delta < |x| \})}^2
    + \|g_h\|_{L^2((0,T)\times\{r < |x| < \delta\}))}^2
    +\|g_h\|_{L^2((0,T)\times\{ |x| < r\}\cap \omega)}^2
    \notag\\
    &\leq C\eu^{-c/h} + Ch^n\eu^{-r^2\!/h} + Ch^n\Leb(B(0,r)\cap \omega).\notag
    \intertext{Reducing $\delta$ if necessary, we may assume that $r^2 < c$, and we can drop the first term of the right-hand side:}
    \|g_h\|_{L^2((0,T)\times\omega)}^2
    &\leq Ch^n\eu^{-r^2\!/h} + Ch^n \Leb(B(0,r)\cap \omega).\label{eq-rhs-upper}
\end{align}

\step{Conclusion}
If the observability inequality~\eqref{eq-obs} holds, according to \cref{eq-lhs-lower,eq-rhs-upper}, there exist $c,C>0$ such that for any $r>0$ and $h>0$ small enough:
\begin{align}
    ch^{n/2}\eu^{- 2T(1+2\epsilon') \Re\rho(\lambda/h)} 
    &\leq C\eu^{-r^2\!/h} + C \Leb(B(0,r)\cap \omega).
    \notag
    \intertext{Since $\rho(\xi) = o(\xi)$ and $\sqrt{h_r 2(T+\epsilon)\Re\rho(\xi_0/h_r)} \leq r$, we get that $h_r \to 0$ as $r\to 0$. Hence, for $r$ small enough, we can apply the previous inequality with $h = h_r$, which gives us }
    ch_r^{n/2}\eu^{- 2T(1+2\epsilon') \Re\rho(\lambda/h_r)} 
    &\leq C\eu^{-r^2\!/h_r} + C\Leb(B(0,r)\cap \omega)\notag\\
    &\leq C\eu^{-2(T+\epsilon)\Re\rho(\lambda/h_r)} + C\Leb(B(0,r)\cap \omega)\notag.
\end{align}
This gives us
\begin{align}
\Leb(B(0,r)\cap \omega) 
&\geq ch_r^{n/2}\eu^{-2T(1+2\epsilon') \Re(\rho(\lambda/h_r))}-c\eu^{-2(T+\epsilon)\Re(\rho(\lambda/h_r))}\notag
    \intertext{Now, since $\ln(\xi) = \smallO(\Re\rho(\xi))$, we get $h_r^{n/2} \geq C\eu^{-2T\epsilon' \Re\rho(\lambda/h_r)}$. For small enough $h_r$ (equivalently, small enough $r>0$), this gives us}
\Leb(B(0,r)\cap \omega) 
&\geq c\eu^{-2T(1+3\epsilon') \Re(\rho(\lambda/h_r))}-c\eu^{-2(T+\epsilon)\Re\rho(\lambda/h_r)}.\notag
\intertext{If we choose $\epsilon'$ small enough, the second term of the right-hand side is negligible, and we get for $h$ small enough}
\Leb(B(0,r)\cap \omega) 
&\geq c\eu^{-2(T+\epsilon) \Re\rho(\lambda/h_r)}.\notag
\end{align}
Dividing this inequality by $\Leb(B(x,r))$ gives the claimed inequality.
\end{proof}

\section{Examples of $\gamma$-thick sets}\label{sec-examples}
In this section, we construct non-trivial sets that are $\gamma$-thick for any given function $\gamma \colon (0,r_0]\to [0,1]$. This construction is based on Smith-Volterra-Cantor sets.

\subsection{Thickness of Smith-Volterra-Cantor sets} Let us first recall the definition of Smith-Volterra-Cantor sets.
\begin{definition}[Smith-Volterra-Cantor sets]
Let $(\tau_n)_{n\in \N}$ be a sequence of real numbers such that $0<\tau_n<1$. For $n\in \N$, let $K_n$ be the closed subset of $[0,1]$, finite union of closed disjoint intervals, defined inductively by the following procedure.
\begin{itemize}\label{def-SVC}
    \item $K_0 \coloneqq [0,1]$.
    \item If $K_n = \bigcup_k I_{nk}$, where the $(I_{nk})_k$ are disjoint closed intervals, remove from $I_{nk}$ the middle part of size $\tau_n\Leb(I_{nk})$ and call the resulting sets\footnote{I.e., if $I_{nk} = [a_{nk},b_{nk}]$, set $b'_{nk} \coloneqq (a_{nk}(1+\tau_n) + b_{nk}(1-\tau_n))/2$ and $a'_{nk} = (a_{nk}(1-\tau_n) + b_{nk}(1+\tau_n))/2$, and finally $I'_{nk} \coloneqq [a_{nk},b'_{nk}]\cup [a'_{nk},b_{nk}]$.} $I'_{nk}$. Then set $K_{n+1}\coloneqq \bigcup_k I'_{nk}$.
\end{itemize}
Let $K \coloneqq \bigcap_{n\in\N} K_n$. The set $K$ is the Smith-Volterra-Cantor set associated to the sequence $(\tau_n)_n$.
\end{definition}

At each step in the construction of a Smith-Volterra-Cantor set, we remove a subset of measure $(1-\tau_n)\Leb(K_n)$ from $K_n$ (see \cref{def-SVC}), hence:
\begin{prop}
With the notations of \cref{def-SVC}, $\Leb(K_n) = \prod_{k=0}^{n-1} (1-\tau_n)$ and $\Leb(K) = {\prod_{n=0}^{+\infty} (1-\tau_n)}$. In particular, $\Leb(K) > 0$ if and only if $\sum_n \tau_n < +\infty$.
\end{prop}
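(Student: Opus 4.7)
The plan is to establish the recursive formula for $\Leb(K_n)$ by induction on $n$, then pass to the limit to obtain $\Leb(K)$, and finally translate positivity of the resulting infinite product into a summability condition on $(r_n)$.

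For the induction, the base case $\Leb(K_0) = \Leb([0,1]) = 1$ matches the empty product. For the inductive step, $K_n$ is by construction a finite disjoint union of closed intervals $I_{nk}$, and from each one removes an open middle subinterval of length $r_n\Leb(I_{nk})$; by additivity, the remaining piece $I'_{nk}$ satisfies $\Leb(I'_{nk}) = (1-r_n)\Leb(I_{nk})$, and summing over $k$ gives $\Leb(K_{n+1}) = (1-r_n)\Leb(K_n)$, whence $\Leb(K_n) = \prod_{k=0}^{n-1}(1-r_k)$. Next, $(K_n)_{n\in\N}$ is a decreasing sequence of compact subsets of $[0,1]$ with intersection $K$, and $\Leb(K_0) = 1$ is finite, so continuity of the Lebesgue measure from above yields
\[
\Leb(K) = \lim_{n\to\infty}\Leb(K_n) = \prod_{n=0}^{+\infty}(1-r_n).
\]

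For the equivalence between $\Leb(K) > 0$ and $\sum_n r_n < +\infty$, I would invoke the standard criterion for infinite products of terms in $(0,1)$: if $r_n \not\to 0$, both the product vanishes and the series diverges, so there is nothing to prove; if $r_n \to 0$, then $\log(1-r_n) \sim -r_n$, so $\sum_n \log(1-r_n)$ converges in $\R$ if and only if $\sum_n r_n$ does, and exponentiating translates this into positivity of the product. The only subtle point in the whole argument is the passage to the limit, which is immediate from $\Leb(K_0) < +\infty$; beyond that I do not expect any genuine obstacle, as this is the classical measure computation for a Cantor-type set.
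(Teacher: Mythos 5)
Your proof is correct and follows the same route the paper intends, which it compresses into the single remark that each step removes measure proportional to $r_n\Leb(K_n)$; you simply make explicit the induction, the continuity of $\Leb$ from above on the decreasing compacts $K_n$, and the standard convergence criterion for $\prod(1-r_n)$. Note in passing that you correctly write $\prod_{k=0}^{n-1}(1-r_k)$ where the statement has the index typo $\prod_{k=0}^{n-1}(1-r_n)$.
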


Our first result is some upper and lower bounds on the thickness of the complement of Smith-Volterra-Cantor sets.
\begin{prop}\label{th-SVC-thick}
Let $(\tau_n)_n\in (0,1)^\N$. Assume that $\sum_n \tau_n < +\infty$. Let $K$ be the associated Smith-Volterra-Cantor set and $\omega \coloneqq \R \setminus K$. There exist $c>0$, $C>0$ and $r_0>0$ such that for every $0<r<r_0$,
\[
\frac1{24}\ \sum_{\rule{0em}{1.7ex}\mathclap{k\geq \log_2(3\Leb(K)/r)}}\ \tau_k
\leq \inf_{x\in \R}\dfrac{\Leb(\omega \cap B(x,r))}{\Leb(B(x,r))} 
\leq 6\ \sum_{\mathclap{\quad k\geq \log_2(\Leb(K)/4r)}} \ \tau_k,
\]
where $\log_2$ is the base $2$ logarithm $\log_2(x) = \ln(x)/\ln(2)$.
\end{prop}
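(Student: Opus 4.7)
The proof rests on the multiscale structure of $K$. I would first record that each $K_n$ is a disjoint union of $2^n$ closed intervals of common length $\ell_n := 2^{-n}\prod_{k<n}(1-r_k)$, satisfying $c_0\,2^{-n}\leq \ell_n\leq 2^{-n}$; write $I_n(x)$ for the unique such interval containing a given $x\in K_n$. Inside any of these intervals the measure of $\omega$ is exactly $\ell_n(1-\prod_{k\geq n}(1-r_k))$, which, using $\prod(1-r_k)\leq \exp(-\sum r_k)$ and the elementary bound $1-e^{-x}\in[x/2,x]$ valid for $x\in(0,1)$, is comparable to $\ell_n\sum_{k\geq n}r_k$ as soon as $\sum_{k\geq n}r_k\leq 1$, which happens for large $n$ since $\sum r_k<\infty$. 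Both estimates in the proposition will then follow by comparing $B(x,L)$ to an interval of $K_n$ of length of order $L$.

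For the lower bound, I would fix $x\in\R$ and $L>0$ small. The cases $x\notin[0,1]$ and $x\in\omega$ with $d(x,K)\geq L/2$ give density $\geq 1/2$ trivially, which exceeds the stated right-hand side for $L<L_0$. If $x\in\omega$ with $d(x,K)<L/2$, I would pick a nearest point $y\in K$, use the inclusion $B(y,L/2)\subset B(x,L)$, and reduce to the case $x\in K$. For $x\in K$, take $n_0:=\lceil \log_2(3c_0/L)\rceil$; the point of this specific choice is that for $L$ below some threshold $L_0$ the partial product $\prod_{k<n_0}(1-r_k)$ is close enough to $c_0$ that $\ell_{n_0}\leq L$, forcing $B(x,L)\supset I_{n_0}(x)$. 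Then
\[
\Leb(\omega\cap B(x,L))\ \geq\ \ell_{n_0}\Bigl(1-\prod_{k\geq n_0}(1-r_k)\Bigr)\ \gtrsim\ c_0\, 2^{-n_0}\sum_{k\geq n_0} r_k\ \gtrsim\ L\sum_{k\geq \log_2(3c_0/L)} r_k,
\]
and dividing by $2L$ gives the announced lower bound.

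For the upper bound, it is enough to exhibit a single well-chosen point $x^*\in K$. I construct $x^*$ satisfying the margin property
\[
d(x^*,\partial I_n(x^*))\geq \tfrac{1}{2}\ell_{n+1}\qquad \text{for every }n\geq 0,
\]
by prescribing its binary address as an alternating sequence of left/right subinterval choices: $I_{n+1}(x^*)$ is the left subinterval of $I_n(x^*)$ when $n$ is even and the right when $n$ is odd. A direct computation shows that this alternation keeps $x^*$ in the \emph{inner half} of $I_{n+1}(x^*)$, namely the half adjacent to the removed middle gap of $I_n(x^*)$, which yields the margin. With $x^*$ fixed, take $n_1$ to be the largest integer with $\ell_{n_1+1}\geq 2L$; then $B(x^*,L)\subset I_{n_1}(x^*)$ and $\ell_{n_1}$ is comparable to $L$, so
\[
\Leb(\omega\cap B(x^*, L))\ \leq\ \ell_{n_1}\Bigl(1-\prod_{k\geq n_1}(1-r_k)\Bigr)\ \lesssim\ L\sum_{k\geq n_1} r_k,
\]
and the inequality $n_1\geq\log_2(c_0/(CL))$ deduced from $\ell_{n_1+1}\geq 2L$ together with $\ell_n\geq c_0 2^{-n}$ produces the announced upper bound (the exact constant $c_0/(4L)$ of the statement coming out of careful bookkeeping in the choice of $n_1$).

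The main obstacle I foresee is constructing the point $x^*\in K$ with the margin property. Endpoints of the $I_{n,k}$ have zero margin at some scale, and midpoints of $I_{n+1}(x)$ are removed at the next stage and so do not belong to $K$; the alternating-address construction is the cleanest way to produce a single point in $K$ whose margin remains of order $\ell_{n+1}$ at every scale. Once this is available, both bounds reduce to the same direct comparison between $B(x,L)$ and an interval of the Cantor construction of size comparable to $L$.
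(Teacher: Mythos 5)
Your proof is correct in substance and rests on the same two pillars as the paper's: the identity $\Leb(I_{nk}\cap\omega)=\ell_n\bigl(1-\prod_{k\geq n}(1-r_k)\bigr)\asymp\ell_n\sum_{k\geq n}r_k$, and the comparison of $B(x,L)$ with an interval of the construction at the scale $\ell_n\sim L$. Your lower bound is essentially the paper's (dichotomy between $x$ far from $K$, where a gap of length $\gtrsim L$ sits inside $B(x,L)$, and $x$ close to $K$, where $B(x,L)$ swallows a whole $I_{n_0k}$), up to the index arithmetic in Step~\ref{step-SVC-ineq}. Where you genuinely diverge is the witness for the upper bound: since one bounds $\inf_x$ separately for each fixed $L$, the witness may depend on $L$, and the paper simply centers the ball at the \emph{midpoint} of some $I_{nk}$ --- a point of the removed gap, hence of $\omega$, not of $K$ --- so that $B(x,\ell_n/2)=I_{nk}$. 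Your alternating-address point $x^*\in K$ does satisfy the margin property you claim (the distance to the nearer endpoint of $I_m(x^*)$ is at least $\ell_{m+1}-\ell_{m+2}\geq\ell_{m+1}/2$), so your argument is valid, but the uniformity in $L$ buys nothing and costs a factor of $2$: your margin is of order $\ell_m/4$ versus the midpoint's $\ell_m/2$, so the largest admissible $n_1$ with $B(x^*,L)\subset I_{n_1}(x^*)$ is one unit smaller, and the bookkeeping you defer yields an upper bound of the form $C\sum_{k\geq\log_2(c_0/8L)}r_k$ rather than the stated $C\sum_{k\geq\log_2(c_0/4L)}r_k$; indeed no single point of $K$ can have margin $\ell_m/2$ at every scale, so your parenthetical claim that careful bookkeeping recovers the constant $c_0/4L$ is not quite attainable with this construction. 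This is harmless for the intended application (\cref{th-SVC-exp-thick} only uses the order of magnitude, and the remark after the proposition makes clear the constants are not optimized), but to match the statement literally you should let the center depend on $L$ and take it at the midpoint of an $I_{nk}$.
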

With a more careful analysis in the proof below, it seems we could improve this inequality by replacing the $\log_2(3\Leb(K)/r)$ by $\log_2(\kappa \Leb(K)/r)$ with some $\kappa <3$. We don't know what the optimal $\kappa$ is. We don't pursue this because we don't need such a sharp estimate. In the same vein, the factors $1/{24}$ and $6$ are not optimal either.

\begin{proof}
Remark that we only need to estimate $\Leb(\omega \cap B(x,r))$ for $x\in [0,1]$. Indeed, if for instance $x>1$, $\omega \cap B(x,r)$ contains $[x,x+r]$, hence $\Leb(\omega\cap B(x,r))/\Leb(B(x,r)) \in [1/2,1]$.

\step{Notations and preliminary computations}
In this proof, we denote by $I_{nk}$ the intervals that appears in the construction of $K$, as defined in \cref{def-SVC}. We denote the length of $I_{nk}$ (which does not depend on $k$) by $\ell_n$. We have
\[
    \ell_n = \frac{1-\tau_{n-1}}2 \ell_{n-1}.
\]
Notice that
\[
\Leb(I_{nk}\cap \omega) = \Leb(I_{nk}) - \Leb(I_{nk}\cap K) = \ell_n\big(1- \prod_{k\geq n} (1-\tau_k)\big).
\]
In addition, we can estimate the right-hand side in the following way
\begin{IEEEeqnarray*}{rCl"s}
    1-\prod_{k\geq n}(1-\tau_k)
    &=& 1-\exp\Big(\sum_{k\geq n} \ln(1-\tau_k)\Big)\\
    &=& 1-\exp\Big(\sum_{k\geq n} -\tau_k(1+o_k(1))\Big)&(because $\tau_k \to0$)\\
    &=& 1-\exp\Big(-(1+o_n(1))\sum_{k\geq n}\tau_k\Big) &(because $\sum \tau_k<+\infty$)\\
    &=& 1-\Big(1-(1+o_n(1))\sum_{k\geq n}\tau_k\Big)&(because $\sum\limits_{k\geq n} \tau_k \xrightarrow[n\to\infty]{}0$)\\
    &=& (1+o_n(1))\sum_{k\geq n}\tau_k.
\end{IEEEeqnarray*}
Finally, multiplying by $\ell_n$,
\begin{equation}\label{eq-mes-Ink-w}
    \Leb(I_{nk}\cap \omega) = (1+o_n(1))\ell_n \sum_{k\geq n} \tau_k .
\end{equation}

\step{Lower bound when $r$ is comparable to $\ell_n$}\label{step-SVC-lb}
Let $r>0$ and $n\in \N$ be such that $2\ell_n \leq r \leq 6 \ell_n$. Let $x \in [0,1]$.

If $\omega \cap B(x,r)$ contains an interval of length $\geq \ell_n /2$, $\Leb(\omega \cap B(x,r)) \geq \ell_n /2.$

If that is not the case, then, $\distance(x,K_n) < \ell_n/4$. Since $r \geq 2\ell_n$, this implies that $B(x,r)$ contains some $I_{nk}$. Hence, according to \cref{eq-mes-Ink-w},
\[
 \Leb(\omega\cap B(x,r)) \geq\Leb(\omega\cap I_{nk}) = (1+o_n(1)) \ell_n \sum_{k\geq n}\tau_k.
\]

Putting the two cases together, and assuming that $n$ is large enough:
\[
\inf_{x\in \R}\frac{\Leb(\omega \cap B(x,r))}{\Leb(B(x,r))} 
\geq \frac{\ell_n}{2r}\min\Big((1+\smallO_n(1)) \sum_{k\geq n} \tau_k,\frac12\Big) \geq \frac{\ell_n}{4r} \sum_{k\geq n} \tau_k.
\]
Since $r\leq  6 \ell_n$,
\begin{equation}\label{eq-SVC-lb}
    \inf_{x\in \R}\frac{\Leb(\omega \cap B(x,r))}{\Leb(B(x,r))} \geq
    \frac1{24} \sum_{k\geq n} \tau_k,
\end{equation}
this inequality being valid whenever $n$ is large enough and when $2\ell_n \leq r \leq 6 \ell_n$.

\step{Upper bound when $r$ is comparable to $\ell_n$}\label{step-SVC-ub}
Let $r>0$ and $n\in \N$ be such that $\ell_n/3 \leq 2r \leq \ell_n$. Let $x \in [0,1]$ in the middle of a $I_{nk}$, so that $B(x,\ell_n/2) = I_{nk}$. Then $B(x,r) \subset I_{nk}$, and according to \cref{eq-mes-Ink-w},
\begin{equation*}
    \Leb(\omega \cap B(x,r)) 
    \leq \Leb(I_{nk}\cap \omega)
    = (1+o_n(1))\ell_n \sum_{k\geq n} \tau_k.
\end{equation*}
Since, $\ell_n/ 3\leq 2r$, and assuming $n$ is large enough
\begin{equation}\label{eq-SVC-ub}
\inf_{x\in \R}\frac{\Leb(\omega \cap B(x,r))}{\Leb(B(x,r))}\leq (1+\smallO_n(1))\frac{\ell_n}{2r} \sum_{k\geq n} \tau_k \leq 6 \sum_{k\geq n} \tau_k,
\end{equation}
this inequality being valid whenever $n$ is large enough and when $\ell_n/3 \leq 2r \leq \ell_n$.

\step{Solving the inequality $a\ell_n \leq r \leq b\ell_n$}\label{step-SVC-ineq} Let $r>0$ and $0<\kappa <1$. Set $n(r) \coloneqq \lceil \log_2(\Leb(K)/r)\rceil$, where $\lceil \cdot\rceil$ is the ceiling function. We aim to prove that for $r$ small enough, $\kappa \ell_{n(r)} \leq r \leq 2 \ell_{n(r)}$.

According to the definition of $I_{nk}$, $\ell_n = 2^{-n}\prod_{k=0}^{n-1} (1-\tau_k)$. Recall that $\Leb(K)=\prod_{k=0}^{+\infty}(1-\tau_k) >0$. Define $q_n$ by $\Leb(K)q_n = \prod_{k=0}^{n-1}(1-\tau_k)$, i.e., $q_n = \big(\prod_{k\geq n} (1-\tau_k)\big)^{-1}$. Then $q_n >1$ and $q_n \to 1$ as $n\to +\infty$. With this notation, we have the equivalences
\begin{IEEEeqnarray*}{rCl}
    \kappa\ell_n \leq r \leq 2 \ell_n
    &\eq& 2^{-n} \kappa \Leb(K)q_n \leq r \leq 2^{1-n} \Leb(K)q_n\\
    &\eq& -n +\log_2(\kappa \Leb(K)q_n) \leq \log_2(r) \leq 1 -n + \log_2(\Leb(K)q_n)\\
    &\eq& \log_2\Big(\frac{\Leb(K)}r\Big) + \log_2(\kappa)+ \log_2(q_n) \leq n \leq \log_2\Big(\frac{\Leb(K)}r\Big) + 1 + \log_2(q_n).\IEEEyesnumber \label{eq-L-n}
\end{IEEEeqnarray*}
According to the definition of the ceiling function,
\[
\log_2\Big(\frac{\Leb(K)}r\Big) \leq n(r) < \log_2\Big(\frac{\Leb(K)}r\Big) +1.
\]
Since $\log_2(\kappa) <0$, $\log_2(q_n)>0$ and $q_n\to 1$, the inequalities on the right-hand side of \cref{eq-L-n} are satisfied for $n=n(r)$ and small enough $r>0$. Hence, for $r$ small enough, $\kappa \ell_{n(r)}\leq r \leq 2\ell_{n(r)}$, as claimed.

\step{Conclusion}
Applying the previous step with $\kappa = 2/3$, and replacing $r$ by $r/3$, we see that lower bound~\eqref{eq-SVC-lb} holds for $n = \big\lceil \log_2\big(\frac{3\Leb(K)}{r}\big)\big\rceil$ when $r$ is small enough. Hence, the lower-bound stated in \cref{th-SVC-thick} holds.

Applying step \ref{step-SVC-ineq} with $\kappa = 2/3$ and $r$ replaced by $4r$, we get that the upper bound~\eqref{eq-SVC-ub} holds with $n = \big\lceil \log_2\big(\frac{\Leb(K)}{4r}\big)\big\rceil$ and $r$ small enough, which gives the stated upper bound.
\end{proof}

\subsection{Construction of \texorpdfstring{$\gamma$}{}-thick sets}
Let $\gamma\colon (0,r_0]\to [0,1]$ be such that $\gamma(r)\to 0$ as $r\to 0$.\footnote{The case $\gamma(r) \to 0$ as $r\to 0$ is the interesting one. In fact, we claim that if $\limsup_{r\to 0} \gamma(r) > 0$ and $\omega \subset \R^n$ is thick with respect to $\gamma$, then $\R^n\setminus \omega$ is negligible. Indeed, in this case, the definition of $\gamma$-thickness tells us that for all $x \in \R^n$, $\limsup_{r\to 0} \Leb(\omega \cap B(x,r))/\Leb(B(x,r)) >0$. On the other hand, Lebesgue's differentiation theorem applied to $\mathds 1_\omega$ implies that for almost every $x\notin \omega$, $\lim_{r\to 0} \Leb(\omega \cap B(x,r))/\Leb(B(x,r)) =0$, which is only possible if $\R^n\setminus \omega$ is negligible.} Let us explain how we can use \cref{th-SVC-thick} to construct a set that is thick with respect to the function $\gamma$.

First, we prove that it is sufficient to treat the case where $\gamma$ is increasing. Indeed, set $\gamma_1(r) \coloneqq \sup_{(0,r]} \gamma$, i.e., the smallest non-decreasing function that is larger or equal than $\gamma$. Finally, we set $\gamma_2 \coloneqq \gamma_1 + \phi(r)$ where $\phi$ is a small increasing function such that $\lim_{r\to 0} \phi(r) = 0$. This function $\gamma_2$ is such that $\gamma_2(r) \to 0$ as $r\to 0$, $\gamma_2$ is increasing and $\gamma_2(r)\geq\gamma(r)$. Hence, a set that is thick with respect to $\gamma_2$ will also be thick with respect to $\gamma$. From now on, we assume that $\gamma$ is increasing.

We are looking for a sequence $(\tau_k)_k \in (0,1)^{\set N}$ such that $\sum \tau_k < + \infty$ and such that the left-hand side of the thickness estimate in \cref{th-SVC-thick} is larger than $\gamma(r)$, at least for small enough $r$. Moreover, if $K$ is the Smith-Volterra-Cantor set associated to $(\tau_k)_k$ we will look for such a sequence such that $\Leb(K) = 1/3$. In other words, we want $0<\tau_k<1$ and:
\begin{gather}
    \Leb(K) = \prod_{k=0}^{+\infty}(1-\tau_k) = \frac13\label{eq-cond-measure},\\
    \exists r_0>0,\ \forall 0<r\leq r_0,\ \gamma(r) \leq \frac1{24}\sum_{\rule{0em}{1.7ex}\mathclap{k\geq \log_2(1/r)}} \tau_k.\label{eq-cond-thick}
\end{gather}

The thickness condition~\eqref{eq-cond-thick} imposes $\sum_{k\geq n}\tau_k \geq 24\sup_{[2^{-n-1},2^{-n})}\gamma = 24 \gamma(2^{-n})$ since $\gamma$ is increasing. Motivated by this, we set $\tau_n \coloneqq 24(\gamma(2^{-n}) - \gamma(2^{-n-1}))$ when it is defined, i.e., for $n\geq n_0 \coloneqq \lceil\log_2(1/r_0)\rceil$. Notice that since $\gamma$ is increasing, we do have $\tau_k>0$. For $n\geq n_0$, we have $\sum_{k\geq n} \tau_k = 24(\gamma(2^{-n})-\lim_{0} \gamma) = 24\gamma(2^{-n}) < +\infty$. For $n<n_0$, we choose some arbitrary value for $\tau_n$, for instance $\tau_n = 1/2$. This sequence $(\tau_k)$ satisfies the thickness condition~\eqref{eq-cond-thick} by construction.

Of course, we might not have the measure condition~\eqref{eq-cond-measure}, and for some $k$, we might not even have $\tau_k<1$. But we can tweak the sequence $(\tau_k)_k$ to ensure these properties. Notice that if we change a finite number of $\tau_k$, the thickness condition~\eqref{eq-cond-thick} still holds (with a smaller $r_0$). There are a finite number of $\tau_k$ that are larger or equal than $1$ (if any), and we can set them to, e.g., $1/2$. Next, if $\Leb(K)>1/3$, increase $\tau_0$ to reduce $\Leb(K)$. And if $\Leb(K)<1/3$, choose $N$ so that $\prod_{k=N+1}^{+\infty}(1-\tau_k)>1/3$, and decrease $\tau_0,\dots,\tau_N$ to increase $\Leb(K)$.

Let us end this construction by noticing that it is almost optimal in the following sense: according to the right-hand side of the thickness estimate in \cref{th-SVC-thick}, the set $\omega = \set R\setminus K$ is such that for $r$ small enough,
\begin{align*}
 \inf_{x\in \set R}\frac{\Leb(B(x,r)\cap \omega)}{\Leb(B(x,r))} 
 &\leq 6\ \sum_{\mathclap{\quad k\geq \log_2(1/12r)}} \ \tau_k\\
 &= 6\times 24 \gamma \left( 2^{-\lceil\log_2(1/12r)\rceil}\right)\\
 &\leq 144 \gamma(12r).
\end{align*}

We summarize this construction in the following proposition:
\begin{prop}\label{th-omega-gamma-thick}
 Let $\gamma\colon (0,r_0]\to [0,1]$ be such that $\gamma(r) \to 0$ as $r\to 0$. There exists a set $\omega \subset \set R$ such that $\Leb(\set R\setminus \omega)>0$ and such that for every $r$ small enough,
 \[
 \gamma(r) \leq \inf_{x\in \set R}\frac{\Leb(B(x,r)\cap \omega)}{\Leb(B(x,r))}.
 \]
 Moreover, if $\gamma$ is increasing, we can choose $\omega = \set R\setminus K$, where $K$ is the Smith-Volterra-Cantor set associated to a sequence $(\tau_n)_n$ such that for $n$ large enough, $\tau_n = 24(\gamma(2^{-n}) - \gamma(2^{-n-1}))$, in which case, for small enough $r$,
 \[
 \inf_{x\in \set R}\frac{\Leb(B(x,r)\cap \omega)}{\Leb(B(x,r))} \leq 144 \gamma(12r).
 \]
\end{prop}

This proposition constructs $\gamma$-thick sets only in dimension $1$. In higher dimension, we can prove that if $\omega_1\subset \set R$ is $\gamma$-thick, then $\omega\coloneqq \omega_1\times \set R^{n-1}$ is thick with respect to  $a_n\gamma(b_n\cdot)$, for some universal constants $a_n, b_n$ that depends only on $n$. Indeed, let $b_n>0$ such that $[-b_n,b_n]^n \subset B(0,1)$. Then,
\begin{IEEEeqnarray*}{rCl"s}
    \Leb(B(x,r)\cap \omega)
    &\geq& \Leb([x-b_nr,x+b_nr]^n \cap \omega) & ($[x-b_nr,x+b_nr]^n\subset B(x,r)$)\\ 
    &\geq& \Leb([x-b_nr,x+b_nr] \cap \omega_1)(2b_nr)^{n-1}& ($(\prod_i A_i)\cap (\prod_i B_i) = \prod_i(A_i\cap B_i)$)\\
    &\geq& \gamma(b_nr)(2b_nr)^n & ($\omega_1$ is thick with respect to $\gamma$).
\end{IEEEeqnarray*}
Thus,
\[
\frac{\Leb(B(x,r)\cap \omega)}{\Leb(B(x,r)} \geq \frac{(2b_n)^{n}}{\Leb(B(0,1))} \gamma(b_nr).
\]

\appendix

\section{Heat-like equation on the torus}
The previous results are stated for equations posed on the whole space. But we can adapt these results when the equation is posed on the torus $\T^n \coloneqq (\R\slash2\pi\Z)^n$. The equation is written in the same way:
\begin{equation}\label{eq-genT}\tag{$E^\T_\rho$}
\begin{cases}
	\partial_tf(t,x) + \rho(\vert D_x\vert)f(t,x) = \mathds 1_\omega u(t,x), \quad (t,x) \in \R^*_+ \times \T^n,\\
	f(0,\cdot) = f_0\in L^2(\T^n),
\end{cases}
\end{equation}
where $\rho(\vert D_x\vert)$ is again defined with the functional calculus, that is to say, for $f\in L^2(\T^n)$ and $k\in \Z^n$, denoting the $k$-th Fourier coefficient of $f$ by $c_k(f)$, we define $\rho(|D_x|)f$ by $c_k(\rho(|D_x|)f) = \rho(|k|)c_k(f)$.

The strongly dissipative case (e.g.~$\rho(\xi) = \xi^s$ with $s>1$) for these equations on bounded domains or compact manifold is well known, by combining Burq and Moyano's estimate~\cite[Theorem 1]{BM} and Lebeau and Robbiano's method, as stated by Duyckaerts and Miller~\cite[Theorem 6.1]{DM} (see also references in those two articles).

For the weakly dissipative case, we have the following straightforward adaptations of \cref{def-a-thick,thm-main2,thm-mainIII}.

\begin{definition}
Given some $r_0>0$ and a function $\gamma\colon(0,r_0]\rightarrow[0,1]$, a set $\omega\subset\T^n$ is said to be \textit{thick relatively to} $\gamma$ (or $\gamma$-thick) when it is measurable and satisfies that for every $r\in(0,r_0]$ and $x\in \T^n$,
\[\Leb(\omega\cap B(x,r))\geq\gamma(r)\Leb(B(x,r)).\]
\end{definition}
\begin{theorem}[\cref{thm-main2} in the torus]\label{thm-main2T} Let $\rho\colon[0,+\infty)\rightarrow\mathbb C$ and $\gamma_\rho$ be as in \cref{thm-main2}, and $\omega \subset \T^n$ be $\gamma_\rho$-thick. For every $T>0$, the parabolic equation \eqref{eq-genT} is null-controllable from $\omega$ in time $T$.
\end{theorem}
\begin{proof}[Sketch of the proof]
Egidi and Veseli\'c proved a version of Kovrijkine's estimate for functions defined on the torus~\cite[Theorem 2.1]{EV2}. The proof of \cref{thm-main2T} is a copy-paste of the one of \cref{thm-main2}, where we replace Kovrijkine's estimate~\eqref{eq-kov} by the aforementionned version on the torus. 
\end{proof}
\begin{theorem}[\cref{thm-mainIII} on the torus]\label{thm-mainIIIT}
Let $K>0$, $\Cc\subset \C$, $\rho\colon \Cc\cup \R_+ \to \C$ and $h_r$ be as in \cref{thm-mainIII}. Let $T>0$ and $\omega\subset \T^d$ be measurable. Assume that the parabolic equation~\eqref{eq-genT} is null controllable from $\omega$ in time $T>0$. 

There exists $\lambda>0$ and $c>0$ such that if $\epsilon>0$, and if $r\mapsto h_r$ is defined for small enough $r$ and such that for every $r$ small enough,
\[
    \sqrt{h_r (2T+\epsilon) \Re\rho\left(\frac{\lambda}{h_r}\right)} \leq r,
\]
then, for every small enough $r> 0$ and $x\in \T^n$, we have
\[
    \frac{\Leb(\omega\cap B(x,r))}{\Leb(B(x,r))} 
    \geq cr^{-n}\exp\left(-2(T+\epsilon) \Re\rho\left(\frac{\lambda}{h_r}\right)\right).
\]
\end{theorem}
\begin{proof}[Sketch of the proof]
Consider $g_h$ as defined by \cref{eq-def-gh} and
\[
\per{g_h}(t,x) \coloneqq \sum_{k\in \Z^n} g_h(t,x+2\pi k).
\]
We can check that $\per{g_h}$ is a solution of the adjoint equation $\partial_tg(t,x) + \overline{\rho}(\vert D_x\vert)g(t,x) = 0$ on the torus (see \cite[\S4.2]{Ko}). Moreover, the terms for $k\neq 0$ are exponentially small. Hence, when estimating the left-hand side $\|\per{g_h}(T,\cdot)\|_{L^2}$ and the right-hand side $\|\per{g_h}\|_{L^2([0,T]\times \omega)}$ of the observability inequality, only the term for $k = 0$ matters. Thus, we can do all the computations of the proof of \cref{thm-mainIII} with $\per{g_h}$ instead of $g_h$.
\end{proof}

\subsection*{Acknowledgements} The second author thanks Pierre Lissy for interesting discussions on this topic.

\end{document}